\documentclass{article}
\usepackage{amssymb}
\usepackage{amsfonts}
\usepackage{amsmath}
\usepackage{mathrsfs}
\usepackage{mathtools}
\usepackage{graphicx}
\usepackage{tikz}
\usetikzlibrary{trees} 
\newtheorem{theorem}{Theorem}

\newtheorem{definition}[theorem]{Definition}

\newtheorem{proposition}[theorem]{Proposition}
\newtheorem{remark}[theorem]{Remark}

\newenvironment{proof}[1][Proof]{\noindent \textbf{#1.} }{\  \rule{0.5em}{0.5em}}

\begin{document}
\title{ \Large\bf A short construction of the Lie algebra $G_{2}(K)$ over fields $K$ of characteristic $2$}
\author{   \bf  Mashhour  Bani-Ata\footnote{The correspondence author}\;\;, Abdulkareem Alhuraiji }
\date{Department of Mathematics, College of Basic Education \\ Public Authority for Applied Education and Training \\Ardiyah -Kuwait\\  mashhour\_ibrahim@yahoo.com \\
abdulkareemalhuriaji@gmail.com}

\maketitle

\begin{abstract}
The purpose of this paper is to give an explicit and elementary construction for the Lie algebras of type $G_{2}(K)$ of dimension $14$, over the field $K$ of characteristic $2$. We say an elementary construction on the account that we use not more than little naive linear algebra notions.
 
\end{abstract}
\begin{flushleft}
	{\footnotesize \textbf{Mathematics Subject Classification} 2010: 20G20, 20G40, 17B25}

	{\footnotesize \textbf{Keywords and Phrases}: Root base, Lie root, generalized quadrangle, Cartan subalgebra.}

\end{flushleft}

\section{Introduction} 
We use the generalized quadrangle $(\mathbb{P}, \mathcal{L})$ of type $O_{6}^-(2)$, where $\mathbb{P}$ is a set of points and $\mathcal{L}$ is a set of lines, to introduce the notion of Lie-roots $R_\Delta$, $\Delta$ is an element in the set of root bases $\Phi$ and a $27$-dimensional module $A= \langle e_x | x \in \mathbb{P} \rangle$, to prove the following results.
\begin{enumerate}
    \item[*] If $L\in \mathcal{L}$, $W$ is the Weyl group of type $E_{6}$ and $d\in N_{W}(L)$ of order $3$. Then we show that the centralizer $C_{D_L}(d)$ of $d$ in the Lie algebra $D_L$ of type $D_{4}$ is a Lie algebra of type $G_{2}(K)$ of dimension $14$.
    \item[*] We show that $G_{2}(K)$ is an embedded Lie subalgebra of a $28$-dimensional Lie algebra of type $D_{4}$.
    \item[*] For root bases $\Delta$, $X$ in $\Phi$, we show that \begin{align*}
        G_2(K)=&C_{H_L}(d)\oplus \langle R_\Delta , R_X | \Delta , X \in \Phi , R_\Delta^d=R_\Delta,\\
        &S_X=R_X +R_X^d +R_X^{d^2}, R_X^d \neq R_X \rangle,
    \end{align*}
    where $C_{H_L}(d)$ is the centralizer of $d$ in  $H_L$, and $H_L$ is a Cartan Lie subalgebra of $D_4(K)$, of dimension $4$.
 \end{enumerate}
	Here we follow a geometric approach to introduce the notion of root bases and Lie roots, to give a short and explicit construction of $G_2(K)$ and its embedding in the Lie algebra of type $D_4$ over fields $K$ of characteristic $2$, which is a notorious case. This could lead to much understanding of this type of Lie algebras.
There are five exceptional Lie algebras of type $G_2$, $F_4$, $E_6$, $E_7$ and $E_8$ of dimension $14$, $56$, $78$, $133$ and $248$ respectively. The construction of these expcetional Lie algebras has been treated by several authors begining with Cartan \cite{2}, W. Killing \cite{9}, Tits \cite{13}, Harish-Chandra \cite{5}, Jacobson \cite{7,8}, Adam \cite{1}, Fulton and Haris \cite{4}. In \cite{14}, N. Wildberger gave a construction of $G_2$ over the complex numbers requiring no knowledge of Lie theory. The direct sum decomposition of Lie algebras of type $G_2$ has been discussed by Zhu, Hao,  Xin in \cite{15}. The construction of the embedding $D_4< F_4 <E_6$ over fields of characteristic $2$, has been given in \cite{16}. The construction of Lie algebras of type $E_6(K)$ for fields $K$ of characteristic $2$, has been given in \cite{18}. For more information about $G_2$ one may refer to \cite{17} , \cite{19} ,\cite{k} and \cite{h}.

\section{Preliminaries and Notation}
We start with a vector space $V$ over $\mathbb{F}_2$ of dimension $6$ and a non-degenerate quadratic form $Q$ on $V$, of minimal Witt-index . That is, the maximal dimension of a totally singular subspace is $2$. As all quadratic spaces of minimal Witt-index are isomorphic, we can consider $V=\mathbb{F}_4^3$  as vector space over $\mathbb{F}_2$ and \begin{equation*}
    Q(x_1,x_2,x_3)=x_1\overline x_1 + x_2\overline x_2 +x_3\overline x_3 \text{ where } \overline x =x^2.
 \end{equation*}
 Let $\mathbb{P}$ be the quadric of $Q$ i.e $\mathbb{P}= \{0\neq x \in V| Q(x)=0\}$, and $(.|.)$ the corresponding bilinear form \begin{equation*}
   (x|y)=Q(x+y)-Q(x)-Q(y).  
 \end{equation*}
 The order of $\mathbb{P}$ is $27$, the elements of $\mathbb{P}$ are called points and those vectors $0 \neq x \in V$ with $Q(x)\neq 0$ are called esterior points, and hence there are $36$ exterior points. Let \begin{equation*}
     \mathcal{L} = \{ X \leq V| dim X = 2 \text{ and }Q(x)=0 \text{ for all }x\in X \}.
 \end{equation*} The order $|\mathcal{L}|$ of $\mathcal{L}$ is $45$. The elements of $\mathcal{L}$ are called lines.

Each $x\in \mathbb{P}$ is contained in exactly $5$ lines of $\mathcal{L}$ and the quandrangle property also holds, that is if   $L\in \mathcal{L}$, $x\in \mathbb{P}$ with $x\notin L$, then there exists a unique $y\in L$ such that $x,y$ are contained in a line. The pair $(\mathbb{P}, \mathcal{L})$ is a generalized quadrangle of type $O_6^-(2)$. Define \begin{equation*}
    W = \{ g\in GL(V)| Q(x^g)=Q(x) \text{ for all } x\in V \}.
\end{equation*} This group is of type $\Omega_6^-(2).2=U_4(2):2$, and it is a $3$-transposition group generated by its $36$ reflections, i.e by the transformations $\sigma_v$ for \begin{equation*}
    v\in V\text{, }Q(v)=0 \text{ where } x^{\sigma_v}=x+(x|v)v \text{ for all } x\in V.
\end{equation*} All reflections $\sigma_v$, $Q(v)=0$, generate an automorphism group $O(V,Q)$ of $(V, Q)$, also denoted as the corresponding orthogonal group of \begin{equation*}
    O_6^-(2)=\Omega_6^-(2).2\cong U_4(2).2.
\end{equation*}
These reflections $\sigma_v$ form a class of $3$-transpositions in $W$ and generate $W$. The group $W$ is the Weyl group of type $E_6$ and of order $51840$. For the above observation one may refer to \cite{16} and \cite{18}.

\begin{remark} \label{re6}
The Weyl group is transitive on $\mathbb{P}$ by Witt's Lemma, that is for an orthogonal space $V$ with $U_1,U_2\leq V$ are two subspaces of $V$, such that $(U_1,Q|_{U_1})$ and $(U_2,Q|_{U_2})$, where $Q|_{U_{i}}$, $i=1,2$, is the restriction of $Q$ on $U_{i}$, are isomorphic i.e there exists a linear transformation $h: U_1 \rightarrow U_2$ such that $Q(u_1) = Q(u_1^h)$ for all $u_1\in U_1$, then $h$ can be extended to an isomorphism of $V$ i.e there exists $g\in O(V,Q)$, with $U_1^g=U_2$ and $g|_{U_1}=h$.    
\end{remark}

\begin{definition} \label{de7}
     A subset $\Delta$ of $\mathbb{P}$ is called a root base if $\Delta$ is a $\mathbb{F}_2$-base of $V$ and for any two distinct elements. $x,y\in \Delta$, $(x|y)=1$. The set of root bases in $\mathbb{P}$ is denoted by $\Phi$.
\end{definition}

\begin{remark} \label{r7}
     If $\Delta$ is a root base, then $s_\Delta =\displaystyle\sum_{x\in \Delta} x$ is an exterior point, and $\Delta^*=\Delta+s_\Delta$ is also a root base. We call $\Delta$ and $\Delta^*$ corresponding root bases. \begin{equation*}
         \text{Let } \Delta_0=\{ v\in \mathbb{P} | (v|s_\Delta)=0\}, \text{ then } \mathbb{P}= \Delta_0\cup \Delta \cup \Delta^*.
     \end{equation*} 
     A line is either contained in $\Delta_0$ or it is a transversal line i.e it intersects $\Delta$, $\Delta^*$ and $\Delta_0$.
\end{remark}

\begin{proposition} \label{p8}
    \begin{enumerate}
        \item[(1)] $\mathbb{P}$ contains exactly $72$ root bases, i.e $|\Phi|=72$.
        \item[(2)] The group $W$ acts transitivity on $\Phi$ with stabilizer $S_6$ and imprimitively on $\Phi$ with $36$ blocks $\{ \Delta, \Delta^* \}$, $\Delta\in \Phi$.
    \end{enumerate}
\end{proposition}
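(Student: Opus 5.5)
Count root bases: ordered sequences $(x_1,\dots,x_6)$ of points with $(x_i|x_j)=1$ for $i\ne j$. A set of six such points is automatically linearly independent. Indeed, if $\sum_{i\in I}x_i=0$ with $I\neq\emptyset$, pairing with $x_j$ gives $|I|-1\equiv 0$ for $j\in I$ and $|I|\equiv 0$ for $j\notin I$. Unless $I=\{1,\dots,6\}$, both cases occur, which is a contradiction. So $|I|=6$ would be forced, but then $|I|-1=5$ is odd, again a contradiction. Hence every set of six pairwise non-orthogonal points is a root base.

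Now count step by step, using that $W$ is transitive on $\mathbb{P}$ (Remark \ref{re6}). Each point $x$ is collinear with $2\cdot 5=10$ points, and $x$ itself is orthogonal to $x$. Hence there are $27-1-10=16$ points $y$ with $(x|y)=1$. For a non-orthogonal pair $x,y$, the plane $\langle x,y\rangle$ is a hyperbolic line with exactly two singular points. Its perp $U=\langle x,y\rangle^\perp$ is a $4$-dimensional space of minus type, an elliptic quadric with $5$ singular points. Choosing $x_3$ means requiring $(x_3|x)=(x_3|y)=1$. Writing $x_3=\lambda x+\mu y+u$ with $u\in U$, the two pairing conditions give $\mu=1$ and $\lambda=1$, so $x_3=x+y+u$. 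Then $Q(x_3)=1+Q(u)$, and we need $Q(u)=1$. The number of $u\in U$ with $Q(u)=1$ is $16-1-5=10$, so there are $10$ choices for $x_3$. Continuing in the same way, the numbers of choices are $16\cdot 10\cdot 6\cdot 3\cdot 1$ after the first point. Each further step reduces to a smaller orthogonal space, with a parity or type check at every stage; I would verify these counts carefully, and this bookkeeping is the main obstacle. The number of ordered sequences is then $27\cdot16\cdot10\cdot6\cdot3\cdot1=77760$. Dividing by $6!=720$ gives $|\Phi|=108$, not $72$, so this naive count does not give (1) and needs checking.

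To get a reliable count, I would instead use the correspondence $\Delta\mapsto s_\Delta$ from Remark \ref{r7}. The map $\Delta\mapsto s_\Delta$ sends each pair $\{\Delta,\Delta^*\}$ to an exterior point, and there are $36$ exterior points. Given an exterior point $s$, the root bases with $s_\Delta=s$ are the sets of six points in $\mathbb{P}\setminus s^\perp$, which splits into two cosets. So each exterior point yields exactly two root bases, and $|\Phi|=2\cdot36=72$. This reconciliation is the real work: I would determine $\mathbb{P}\setminus\Delta_0$ explicitly as having $12$ points, $\Delta\cup\Delta^*$, with $x\mapsto x+s$ pairing them. Then I would show that a $6$-subset of these $12$ points that is pairwise non-orthogonal must be $\Delta$ or $\Delta^*$. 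For this, note that $x$ and $x+s$ are orthogonal, since $(x|x+s)=(x|s)=1$ fails, so the two points of each such pair cannot both lie in a root base. This argument also shows that the earlier step count overcounts, so I would trust this approach.

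For (2): $W$ is transitive on exterior points, by Witt's Lemma (Remark \ref{re6}) applied to $1$-spaces $\langle s\rangle$ with $Q(s)=1$. Let $s$ be an exterior point with stabilizer $W_s$ of index $36$ and order $1440$. This stabilizer is $O(s^\perp/\langle s\rangle)\cong O_5(2)\cong S_6\times 2$. It contains the map $x\mapsto x+(x|s)s$, which swaps $\Delta$ and $\Delta^*$. Hence $W$ is transitive on $\Phi$ with stabilizer of order $51840/72=720$, namely $S_6$, acting by permuting $\Delta$. The pairs $\{\Delta,\Delta^*\}$ are the fibres of the $W$-equivariant map $\Delta\mapsto s_\Delta$, so they form a system of $36$ blocks, and $W$ acts imprimitively on $\Phi$.
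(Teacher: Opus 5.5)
Your final count ($2$ root bases over each of the $36$ exterior points) fails at its decisive step. You rule out other $6$-subsets of $\{v\in\mathbb{P}\mid (v|s)=1\}$ by claiming that $x$ and $x+s$ are orthogonal. In fact $(x|x+s)=(x|x)+(x|s)=0+1=1$, so partners are \emph{non}-orthogonal. Even the false claim would only force a root base to be a transversal of the six pairs $\{x,x+s\}$, leaving $2^6$ candidates rather than $2$.

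The fact you need is the opposite one. For $x\neq y$ in $\Delta$ one has $(x|y+s)=(x|y)+(x|s)=1+1=0$, so each $x\in\Delta$ is orthogonal to every point of $\Delta^*$ except $x+s$. Hence the non-orthogonality graph on these $12$ points is two copies of $K_6$ joined by a perfect matching. A clique meeting both $\Delta$ and $\Delta^*$ then has at most $2$ points, so $\Delta$ and $\Delta^*$ are the only root bases with sum $s$.

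You also still owe that every exterior point is some $s_\Delta$. Witt transitivity on exterior points reduces this to the existence of a single root base, and your argument never produces one once you discard the first count.

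That first count was almost right, and the discrepancy has a precise source. Given $x_1,\dots,x_4$, one of your three candidates for $x_5$ is $x_1+x_2+x_3+x_4$. It is singular, since $Q(x_1+\cdots+x_4)=\sum_{i<j}(x_i|x_j)=6=0$, and it has product $3=1$ with each $x_i$. For that choice $x_1+\cdots+x_5=0$, so your own parity argument shows no $x_6$ exists: $(x_6|x_1+\cdots+x_5)$ would have to equal $5=1$. Only two choices of $x_5$ extend. This gives $27\cdot16\cdot10\cdot6\cdot2\cdot1=51840$ ordered root bases, hence $16$ root bases through each point (the paper's count $27\cdot16/6$ in Remark \ref{r9}) and $|\Phi|=72$, with existence included.

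Better still, $Q(\sum a_ix_i)=\sum_{i<j}a_ia_j$ for any root base $\{x_i\}$. So every bijection between two root bases extends to a unique element of $W$, and $W$ acts regularly on ordered root bases. This yields transitivity on $\Phi$, $W_\Delta\cong S_6$ and $|\Phi|=|W|/6!$ at once, and the block system follows from $(\Delta^g)^*=(\Delta^*)^g$.

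A minor slip in (2): $O_5(2)\cong Sp_4(2)\cong S_6$ has order $720$. It is $W_s=W_\Delta\times\langle\sigma_s\rangle$ that is $S_6\times 2$.
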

\begin{proof}
	See \cite{18}.
\end{proof}

\begin{remark} \label{r9}
Root bases can be constructed as follows. For two points $x$ and $y$ with $( x |y)=1$, set \begin{equation*}
\Delta_{x,y}=\{x\} \cup \{ v\in \mathbb{P} | (x|v)=1, (y|v)=0 \}    
\end{equation*}
For a root base $\Delta$ and $x \in \Delta$ one has a root base $\Delta_{x,x+s_\Delta}$. It is obvious that for a point $x$, there exist $16$ points $y$ with $( x |y)=1$. Hence again $|\Phi|=\frac{27.16}{6}=72$. Any root base corresponds to an anisotropic vector $s_{\Delta} =\displaystyle\sum_{b\in \Delta} b$ and every anisotropic vector $s$ or a reflection $\sigma_{s}$ corresponds to two root bases $\Delta_{1} = \Delta_{2}$ such that $s = s_{\Delta}$. Moreover,\begin{equation*}
    \Delta_2=\Delta_1^{\sigma_s}=\Delta_1+s \text{ and } \Delta_1\cup \Delta_2=\{ v\in \mathbb{P}| (s,v)=1 \}.
\end{equation*}  We denote $\sigma_{s_\Delta}$ by $\sigma_{\Delta}$ and $\Delta^{\sigma_s}$ by $\Delta^{*}$.
\end{remark}

\begin{proposition} \label{p10} 
For root bases $\Delta$ and $\Gamma$ with $s_\Delta\neq s_\Gamma$ it holds\begin{enumerate}
    \item[1.]lf $(s_\Delta |s_\Gamma )=0$, then $| \Delta \cap \Gamma|=|\Delta \cap \Gamma^* |=1$ and $\Delta ^ {\sigma_{\Gamma}} = \Delta$.
    \item[2.] If $(s_\Delta|s_\Gamma)=1$, then
    \item[(i)] $| \Delta \cap \Gamma|=|\Delta^* \cap \Gamma^* |=3$ and $ \Delta \cap \Gamma^*=\Delta^* \cap \Gamma=\phi$ or 
    \item[(ii)] $| \Delta \cap \Gamma^*|=|\Delta^* \cap \Gamma |=3$ and $ \Delta \cap \Gamma=\Delta^* \cap \Gamma^*=\phi$.
    \item[(iii)] If $\Delta \cap \Gamma=\phi$ then $(\Delta^* \cap \Gamma)^{\sigma_\Delta} \cup (\Gamma^* \cap \Delta)^{\sigma_\Gamma}= \Delta^{\sigma_\Gamma}= \Gamma^{\sigma_\Delta} $ is a root base corresponding to $s_\Delta +s_\Gamma$. 
\end{enumerate}
\end{proposition}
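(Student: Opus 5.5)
Throughout, write $s=s_\Delta$ and $t=s_\Gamma$. Both are exterior points, so $Q(s)=Q(t)=1$. I will use three facts repeatedly. First, every point $v\in\mathbb{P}$ with $(v|s)=1$ lies in $\Delta\cup\Delta^*$, by Remark \ref{r9}. Second, $\Delta^*=\Delta+s$. Third, for $x\in\Delta$ we have $(x|s)=\sum_{b\in\Delta}(x|b)=5\cdot 1=1$, since $(x|x)=2Q(x)=0$.

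\textbf{Case $(s|t)=0$.} Consider $W_{0}=\langle s,t\rangle$. On it $Q(s+t)=Q(s)+Q(t)+(s|t)=0$, so $s+t$ is singular. Hence $W_{0}$ is a nondegenerate plane, and $\langle s,t\rangle^\perp$ is a nondegenerate $4$-space of minus type. A point $v$ lies in $\Delta\cap\Gamma$ or $\Delta\cap\Gamma^*$ exactly when $(v|s)=(v|t)=1$ and $v\in\Delta$. I will count these directly inside $\Delta$. Since $s=\sum_{b\in\Delta}b$ and $t=\sum c_b b$ in the basis $\Delta$, we get $(b|t)=\sum_{b'\neq b}c_{b'}=|c|+c_b \pmod 2$, where $|c|$ is the number of nonzero coefficients. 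So the $b\in\Delta$ with $(b|t)=1$ are determined by the support of $c$. The conditions $Q(t)=1$ and $(s|t)=|c|\cdot 1=0$ then restrict $|c|$, and $Q(t)=\binom{|c|}{2}\pmod 2$.

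Requiring $|c|$ even and $\binom{|c|}{2}$ odd forces $|c|\in\{2,6\}$. The value $|c|=6$ gives $t=s$, which is excluded, so $|c|=2$. Then exactly two elements of $\Delta$ satisfy $(b|t)=1$, namely the support. These two are swapped in the following sense: $\Gamma\cup\Gamma^*$ contains both, and they lie in different members. Two points of $\Gamma$ have $(x|y)=1$, but two points $x\in\Gamma$ and $x+t\in\Gamma^*$ differ by $t$. For the two support points $b_1,b_2$ we have $b_1+b_2=t$. So they are in different members, which gives $|\Delta\cap\Gamma|=|\Delta\cap\Gamma^*|=1$.

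Finally, $\Delta^{\sigma_\Gamma}=\Delta$. Here $\sigma_t$ fixes each $b\in\Delta$ with $(b|t)=0$ and swaps $b_1\leftrightarrow b_1+t=b_2$.

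\textbf{Case $(s|t)=1$.} The same coordinate computation now requires $|c|$ odd with $\binom{|c|}{2}$ odd, that is $|c|=3$. (The value $|c|=1$ would make $t$ a point.) The three support elements $b$ have $(b|t)=|c|+1=0$. The other three have $(b|t)=1$, so they lie in $\Gamma\cup\Gamma^*$. These three are pairwise at form value $1$, and their pairwise sums are not $t$, because $t$ has support size $3$ in the basis $\Delta$. So they lie in a single member. Call that member $\Gamma$ after possibly swapping $\Gamma$ and $\Gamma^*$; then $|\Delta\cap\Gamma|=3$ and $\Delta\cap\Gamma^*=\emptyset$. Applying $\sigma_\Delta$, which maps $\Gamma\mapsto\Gamma+(\cdot|s)s$, or repeating the argument with $\Delta^*$ gives $|\Delta^*\cap\Gamma^*|=3$. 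The two alternatives (i) and (ii) are exactly the two labelings.

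For (iii), assume $\Delta\cap\Gamma=\emptyset$. Then $\Delta^{\sigma_\Gamma}=\{b+(b|t)t\}$, so it consists of the three elements of $\Delta\cap\Gamma^*$ shifted by $t$, together with the three elements fixed by $\sigma_t$. I will check the following.

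\begin{itemize}
\item[(a)] $\sigma_t$ is an isometry, so this image is a root base.
\item[(b)] Its sum is $s+3t=s+t$, so it is the root base corresponding to $s+t$.
\item[(c)] By the symmetric computation, $\Gamma^{\sigma_\Delta}$ is also a root base with sum $s+t$.
\end{itemize}

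The two root bases with sum $s+t$ are $\Lambda$ and $\Lambda+(s+t)$. To show $\Delta^{\sigma_\Gamma}=\Gamma^{\sigma_\Delta}$ rather than the other one, I exhibit one common element. This step is the main bookkeeping obstacle. I would use the description in the statement: elements of $(\Delta^*\cap\Gamma)^{\sigma_\Delta}$ are $v+s$ with $v\in\Gamma$, $(v|s)=1$. Then I verify that these coincide with the points of $\Delta$ fixed by $\sigma_t$, using $v+s\in\Delta$ and $(v+s|t)=(v|t)+(s|t)=0+1$ appropriately, adjusting by parity. Matching the two triples then yields the stated union decomposition.
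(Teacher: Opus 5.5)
The paper gives no proof of this proposition (it only refers to earlier work), so your coordinate computation in the basis $\Delta$ has to stand on its own. Its core is sound. The relations $(b|t)=|c|+c_b$, $(s|t)=|c|$ and $Q(t)=\binom{|c|}{2}$ force $|c|=2$ resp.\ $|c|=3$. That proves part 1 and the statement about $\Delta$ in part 2. For the ``single member'' step, state the fact you are using: for $x\in\Gamma$, $y\in\Gamma^*$ one has $(x|y)=1$ iff $y=x+t$, since $y=g+t$ with $g\in\Gamma$ gives $(x|y)=(x|g)+1$. Delete the remarks on $W_0$. The alternating form vanishes on $\langle s,t\rangle$ because $(s|s)=(t|t)=(s|t)=0$, so that plane is not nondegenerate (it is never used).

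Two steps do not go through as written. The first is the claim $|\Delta^*\cap\Gamma^*|=3$, $\Delta^*\cap\Gamma=\emptyset$. Applying $\sigma_\Delta$ cannot give it, since $\sigma_s$ sends $\Gamma$ to a root base with sum $\sigma_s(t)=s+t$, which is neither $\Gamma$ nor $\Gamma^*$. Rerunning the argument for $\Delta^*$ (same sum $s$) shows only that the three points $d+s$ ($d$ in the support of $t$) lie in one member; you have already used your relabeling freedom, so this does not say which. You need a cross relation. For $a\in\Delta\cap\Gamma$ one has $(a|d+s)=(a|d)+(a|s)=1+1=0$, so $d+s\notin\Gamma$. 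Alternatively, $\sum_{g\in\Gamma}(g|s)=(t|s)=1$ makes $|\Gamma\cap(\Delta\cup\Delta^*)|$ odd.

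The second is the decisive step of (iii). For $v\in\Delta^*\cap\Gamma$ you need $v^{\sigma_\Delta}=v+s$ to be fixed by $\sigma_\Gamma$. Your computation $(v+s|t)=(v|t)+(s|t)=0+1$ would instead make $v+s$ \emph{moved} by $\sigma_t$, and ``adjusting by parity'' is not an argument. In fact $(v|t)=1$ because $v\in\Gamma$, so $(v+s|t)=1+1=0$. You also need $\Delta^*\cap\Gamma\neq\emptyset$; in the labeling of (ii) this is exactly the assertion left unproved in the first gap. With both repairs, $v\mapsto v+s$ maps the $3$-set $\Delta^*\cap\Gamma$ injectively onto the three $\sigma_t$-fixed points of $\Delta$. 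Hence $(\Delta^*\cap\Gamma)^{\sigma_\Delta}\cup(\Gamma^*\cap\Delta)^{\sigma_\Gamma}=\Delta^{\sigma_\Gamma}$. Each such $v+s$ lies in both $\Delta^{\sigma_\Gamma}$ and $\Gamma^{\sigma_\Delta}$, and the two root bases with sum $s+t$ are disjoint, so $\Delta^{\sigma_\Gamma}=\Gamma^{\sigma_\Delta}$.
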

\begin{proof}
	See \cite{18}
\end{proof}
\section{Lie algebras of type $E_{6}$ and of characteristic $2$}

Let $K$ be a field of characteristic $2$ and $(\mathbb{P}, \mathcal{L})$ a generalized quadrangle of type $O_6^- (2)$ as defined above, and let $A$ be a $27$-dimensional vector space over $K$ with base $\{ e_x | x \in \mathbb{P} \}$. Let $End_K(A)$ denote the Lie algebra consisting of all elements of $GL(A)$ with Lie product $[X, Y] = XY - YX$. For $v \in V$ define $H_{v} \in End(A)$ by $e_{x} ^ {H_{v} }=(x|v)e_x$, $x\in \mathbb{P}$ and for a root base $\Delta$ define the Lie root $R_{\Delta}$ on $A$ by \begin{equation*}
e_{x} ^ {R_{\Delta} } = \left\{ \begin{array}{cl}
e_{x} ^ {\sigma_{\Delta} }=e_{x+s_\Delta} & , \ x \in \Delta \\
0 & , \ \text{otherwise}
\end{array} \right.    
\end{equation*}

It is obvious that $R_{\Delta}$ has rank $6$ and $R_\Delta$, $R_{\Delta^*}$ are transposed to each other with respect to the base $\{ e_{x}|x \in \mathbb{P}\}$ and $R_{\Delta} ^ 2 = 0$.

\begin{proposition} \label{p11}
For root bases $\Delta$ and $\Gamma$ and for $v,w\in V$ it holds:
\begin{enumerate}
    \item[1.] $[H_v, H_w]=0$ and $[H_v, R_\Delta]=(s_\Delta|v)R_\Delta$.
    \item[2.] $[R_\Delta,R_{\Delta^*}]=H_{s_\Delta}$.
    \item[3.] If $s_\Delta$ and $s_\Gamma$ are distinct and orthogonal, then $[R_\Delta, R_\Gamma]=0$.
    \item[4.] If $(s_\Delta | s_\Gamma)=1$ and $\Delta \cap \Gamma\neq \phi $, then $[R_\Delta, R_\Gamma]=0$.
    \item[5.] If $(s_\Delta | s_\Gamma)=1$ and $\Delta \cap \Gamma= \phi $, then $[R_\Delta, R_\Gamma]= R_{\Gamma^{\sigma_\Delta}}= R_{\Delta^{\sigma_\Gamma}}$.
    \item[6.] If $\Delta\neq \Gamma$, then $R_\Delta R_\Gamma R_\Delta=R_\Delta[R_\Gamma , R_\Delta]=0$. 
\end{enumerate}
\end{proposition}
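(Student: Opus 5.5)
The plan is to verify every identity by evaluating both sides on the basis vectors $e_x$, $x\in\mathbb{P}$. All operators involved are either diagonal ($H_v$) or monomial with at most one nonzero entry per row ($R_\Delta$ sends $e_x\mapsto e_{x+s_\Delta}$ for $x\in\Delta$, and $0$ otherwise). Operators act on the right, so $e_x^{XY}=(e_x^X)^Y$, and since $\operatorname{char}K=2$ we have $[X,Y]=XY+YX$. Throughout, I use that for a root base $\Delta$ and $x\in\Delta$ one has $(x|s_\Delta)=\sum_{y\in\Delta}(x|y)=5=1$ (because $(x|x)=0$), and that $x+s_\Delta\in\Delta^*$ (Remark \ref{r7}).

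Parts 1 and 2 are direct computations. Diagonal operators commute. For $x\in\Delta$,
\[
e_x^{H_vR_\Delta+R_\Delta H_v}=\bigl((x|v)+(x+s_\Delta|v)\bigr)e_{x+s_\Delta}=(s_\Delta|v)e_{x+s_\Delta},
\]
and both sides vanish for $x\notin\Delta$. For part 2 the product $R_\Delta R_{\Delta^*}$ fixes $e_x$ for $x\in\Delta$, and $R_{\Delta^*}R_\Delta$ fixes $e_x$ for $x\in\Delta^*$. Both products kill $e_x$ for $x\in\Delta_0$. By Remarks \ref{r7} and \ref{r9} we have $\Delta\cup\Delta^*=\{v\in\mathbb{P}: (v|s_\Delta)=1\}$, so the sum is exactly $H_{s_\Delta}$.

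For parts 3 to 5, note that $e_x^{R_\Delta R_\Gamma}\neq0$ if and only if $x\in\Delta$ and $x+s_\Delta\in\Gamma$, in which case it equals $e_{x+s_\Delta+s_\Gamma}$; the term $R_\Gamma R_\Delta$ is symmetric. The whole task is therefore to describe the sets $\Delta\cap(\Gamma+s_\Delta)=\Delta\cap(\Gamma^{\sigma_\Delta})$, and this is where Proposition \ref{p10} enters.

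\emph{Part 3.} Here $(s_\Delta|s_\Gamma)=0$. If $x\in\Delta$ and $y=x+s_\Delta\in\Gamma$, then $x=y+s_\Delta=y^{\sigma_\Delta}$ lies in $\Gamma^{\sigma_\Delta}=\Gamma$ by Proposition \ref{p10}(1) applied with the roles swapped. Then $(x|s_\Gamma)=1$ and $(y|s_\Gamma)=1$, so $(s_\Delta|s_\Gamma)=0$, consistent so far. To conclude, I show that both products yield the same unique nonzero entry, so they cancel in characteristic $2$. Since $|\Delta\cap\Gamma|=|\Delta\cap\Gamma^*|=1$, the pair $(x,y)$ is unique: $x$ is the point of $\Delta\cap\Gamma$, and $x+s_\Delta\in\Gamma$ forces $x+s_\Delta\in\Delta^*\cap\Gamma$. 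The term $R_\Gamma R_\Delta$ contributes $e_{x'}\mapsto e_{x'+s_\Gamma+s_\Delta}$ with $x'\in\Gamma\cap\Delta^{\sigma_\Gamma}=\Gamma\cap\Delta$, so $x'=x$ and the two contributions cancel.

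\emph{Part 4.} In case (i) of Proposition \ref{p10}(2), $\Gamma\cap\Delta^*=\emptyset$. The condition $x+s_\Delta\in\Gamma$ would put that point in $\Delta^*\cap\Gamma=\emptyset$, so $R_\Delta R_\Gamma=0$, and symmetrically $R_\Gamma R_\Delta=0$.

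\emph{Part 5.} Now $\Delta^*\cap\Gamma$ and $\Gamma^*\cap\Delta$ each have $3$ elements. $R_\Delta R_\Gamma$ maps $e_x\mapsto e_{x+s_\Delta+s_\Gamma}$ for $x\in(\Delta^*\cap\Gamma)^{\sigma_\Delta}$, and $R_\Gamma R_\Delta$ does the same for $x\in(\Gamma^*\cap\Delta)^{\sigma_\Gamma}$. These two sets are disjoint: the first lies in $\Delta$, while the second lies in $\Delta^{*}\cdots$, more precisely in $\Gamma$, and $\Delta\cap\Gamma=\emptyset$. By Proposition \ref{p10}(2)(iii) their union is the root base $\Gamma^{\sigma_\Delta}$ with sum $s_\Delta+s_\Gamma$. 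Hence the sum of the two products is $R_{\Gamma^{\sigma_\Delta}}$.

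\emph{Part 6.} The map $e_x\mapsto e_x^{R_\Delta R_\Gamma R_\Delta}$ requires $x\in\Delta$, $x+s_\Delta\in\Gamma$, and $x+s_\Delta+s_\Gamma\in\Delta$. Summing the $Q$-values or inner products shows this is impossible in every case above. Alternatively, use parts 3 to 5: $[R_\Gamma,R_\Delta]$ is $0$ or $R_{\Delta'}$ with $\Delta'=\Delta^{\sigma_\Gamma}\neq\Delta$ having image in $\Delta^*$, and then $R_\Delta R_{\Delta'}=0$ by the part-4 argument since $\Delta\cap\Delta'\neq\emptyset$. The identity $R_\Delta R_\Gamma R_\Delta=R_\Delta[R_\Gamma,R_\Delta]$ itself follows from $R_\Delta^2=0$. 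The case $\Gamma=\Delta^*$ is excluded only for the vanishing claim there.

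The main obstacle is the bookkeeping in parts 3 and 5: deciding exactly which points $x$ contribute, and checking that the two index sets are disjoint and really are the sets named in Proposition \ref{p10}.
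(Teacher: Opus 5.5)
Your proposal is correct and takes essentially the paper's route. The paper gives no in-text proof beyond citing its references, and its one explicit computation of this kind (Case (2) in the proof of Proposition \ref{p5}) is the same entrywise check you give for part 2.

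Your remark on part 6 should be stated as a correction rather than an aside. For $\Gamma=\Delta^*$ one gets $R_\Delta R_{\Delta^*}R_\Delta=R_\Delta H_{s_\Delta}=R_\Delta\neq 0$, so the vanishing claim is false under the stated hypothesis $\Delta\neq\Gamma$, and what you actually prove, correctly, is the version with $\Gamma\neq\Delta^*$.
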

\begin{proof}
For the proof see \cite{18} and \cite{20}.
\end{proof}

\begin{remark} \label{d12}
    The Lie algebra $\mathbb{E}$ generated by the elements $H_{v}, v \in V$ and the $72$ Lie roots $R_\Delta$, $\Delta \in \Phi$ is a subalgebra of $End_K(A)$ of dimension $78$ over $K$. Proposition \ref{p11} implies that the Lie algebra $\mathbb{E}$ is of type $E_6$. The group $W$ acts on $A$ by $e_{x} ^ g = e_{x^g}$, $W$ permutes the roots $R_\Delta$ and induces a Weyl group on the Lie algebra $\mathbb{E}$, normalizing the Cartan subalgebra $\mathbb{H} = \langle H_v | v \in V \rangle$.
\end{remark}.

\begin{theorem} \label{t13}
     Let $L$ be a line and $\Phi_L =\{ \Delta \in \Phi | s_\Delta \in L^\perp\}$. Then \begin{equation*}
         D_L=\langle R_\Delta|\Delta \in \Phi _L \rangle
     \end{equation*}  is a Lie subalgebra of $F_4$ of dimension $28$.
\end{theorem}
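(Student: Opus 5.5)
We plan to count the root bases in $\Phi_L$, show that the linear span of the corresponding Lie roots together with the brackets they produce is closed under the Lie product, and then compute its dimension. Fix a line $L$. Then $L^\perp$ is a $4$-dimensional subspace of $V$ with $L\le L^\perp$, and $L^\perp/L$ is a $2$-dimensional anisotropic space. The exterior points in $L^\perp$ are exactly the vectors $v\in L^\perp\setminus L$ with $Q(v)\neq 0$. Since $Q$ is constant on the cosets of $L$ in $L^\perp$, and the three nonzero cosets of $L^\perp/L$ are all anisotropic, there are $3\cdot 4=12$ such exterior points. By Remark~\ref{r9} each exterior point $s$ corresponds to exactly two root bases $\Delta,\Delta^*$ with $s=s_\Delta$. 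Hence $|\Phi_L|=24$, which is the number of roots of $D_4$.

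Next we prove closure. Set
\[
M=\langle H_{s_\Delta}, R_\Delta \mid \Delta\in\Phi_L\rangle .
\]
By Proposition~\ref{p11}(2), $[R_\Delta,R_{\Delta^*}]=H_{s_\Delta}$, so $M\subseteq D_L$. Proposition~\ref{p11}(1) gives that $\langle H_{s}\mid s\in L^\perp\rangle$ acts diagonally on the $R_\Delta$. For $\Delta,\Gamma\in\Phi_L$ with $s_\Delta\neq s_\Gamma$, Proposition~\ref{p11}(3),(4),(5) show that $[R_\Delta,R_\Gamma]$ is either $0$ or $R_{\Gamma^{\sigma_\Delta}}$. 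By Proposition~\ref{p10}(iii), the latter corresponds to $s_\Delta+s_\Gamma$, which again lies in $L^\perp$, and it is exterior because $Q(s_\Delta+s_\Gamma)=1+1+1=1$ when $(s_\Delta|s_\Gamma)=1$. Thus $\Gamma^{\sigma_\Delta}\in\Phi_L$. It follows that
\[
D_L=\langle H_s \mid s\in L^\perp\rangle\oplus\bigoplus_{\Delta\in\Phi_L}KR_\Delta .
\]

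We then compute the dimension. The $R_\Delta$ have pairwise disjoint supports as maps $e_x\mapsto e_{x+s_\Delta}$, since $\Delta$ together with $s_\Delta$ determines them, so they are linearly independent. They are also independent from the diagonal maps $H_v$, because $R_\Delta$ is nilpotent and has no diagonal part. The map $v\mapsto H_v$ is injective on $V$, since $(\cdot|\cdot)$ is nondegenerate and $\mathbb P$ spans $V$. Hence the span of the $H_s$ with $s\in L^\perp$ is $4$-dimensional, provided the exterior points of $L^\perp$ span $L^\perp$. They do: two exterior points in different cosets differ by an exterior point, and the twelve of them together generate $L^\perp$. This gives $\dim D_L=4+24=28$.

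Finally, $D_L$ sits inside $F_4$ through the embedding $D_4<F_4<E_6$ of \cite{16}, where $F_4$ is realized inside $\mathbb E$ of Remark~\ref{d12}. The main obstacle is the case of Proposition~\ref{p11}(5), namely checking that $\Gamma^{\sigma_\Delta}$ has sum $s_\Delta+s_\Gamma$ and so stays in $\Phi_L$. We will rely on Proposition~\ref{p10}(iii) for this. We will also need care with the claim that the $H_s$ span exactly a $4$-dimensional space. The identification with $D_4$ inside $F_4$ we would simply cite from \cite{16}.
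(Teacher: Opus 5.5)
Your proposal is correct. It does something the paper does not: the paper's proof of this theorem is only a pointer to \cite{16}, while you derive the result inside the paper's own framework.

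\begin{itemize}
\item You count $|\Phi_L|=24$ from the $12$ exterior points of $L^\perp\setminus L$ and Remark~\ref{r9}.
\item You get closure of $\langle H_s\mid s\in L^\perp\rangle\oplus\bigoplus_{\Delta\in\Phi_L}KR_\Delta$ from the bracket table of Proposition~\ref{p11}. Proposition~\ref{p10}(iii) keeps $s_\Delta+s_\Gamma$ in $L^\perp$, so $\Gamma^{\sigma_\Delta}\in\Phi_L$.
\item You get $\dim D_L=4+24$ from three facts: the $R_\Delta$ have disjoint supports, they have zero diagonal, and $v\mapsto H_v$ is injective.
\end{itemize}

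In effect you prove items (1)--(3) of Proposition~\ref{p1} and the closure and dimension part of (4), which the paper also only cites. You also read $\langle R_\Delta\mid\Delta\in\Phi_L\rangle$ correctly as the Lie algebra generated, not the span; otherwise the dimension would be $24$.

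The citation covers two things your argument does not: the identification of the type as $D_4$, and the containment in $F_4$. The containment cannot be proved from this paper at all, because $F_4$ is never defined here. Deferring that part to \cite{16} is therefore the right call.

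Three small points to tidy:
\begin{itemize}
\item The exterior points span $L^\perp$ because two of them in the same coset of $L$ differ by an arbitrary element of $L$. Equivalently, $12$ vectors cannot lie in a proper subspace of $L^\perp$, which has at most $8$ elements. The fact that points in different cosets differ by an exterior point does not show spanning.
\item State the disjointness of supports precisely. The nonzero entry at position $(x,x+s_\Delta)$ recovers $s_\Delta$, and $\Delta\cap\Delta^*=\emptyset$ separates $\Delta$ from $\Delta^*$.
\item When you invoke Proposition~\ref{p11}(2), say explicitly that $\Delta^*\in\Phi_L$, which holds because $s_{\Delta^*}=s_\Delta$.
\end{itemize}
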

\begin{proof}
	See \cite{16}.
\end{proof}

\section{The Lie algebra $G_2(K)$}
 \begin{proposition} \label{p1}
     Let $\Delta$ be a root base, $L\in \mathcal{L}$. Then \begin{enumerate}
         \item[(1)] The set $R_L=\{R_\Delta| L\subseteq s_\Delta^\perp \}$ has size $24$.
         \item[(2)] $H_L= \langle H_v | L \subseteq v^\perp ,v\in V\rangle$ is of dimension $4$.
         \item[(3)] $D_L=\langle H_L,R_L \rangle =H_L \underset{R_\Delta\in R_L}{\oplus} R_L$
         \item[(4)] $D_L$ is closed under Lie-commutator multiplication and it is a Lie algebra of type $D_4(K)$ of dimension $24+4=28.$
         \item[(5)] $D_4(K)$ is embedded in the Lie algebra $F_4 (K)$ of dimension $56$.
     \end{enumerate}
 \end{proposition}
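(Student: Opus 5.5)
The plan is to count inside the orthogonal space $(V,Q)$ and then read off closure from the bracket table of Proposition \ref{p11}.

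\textbf{Part (1).} By Remark \ref{r9}, each exterior point $s$ corresponds to exactly two root bases $\Delta,\Delta^*$, both with $s_\Delta=s_{\Delta^*}=s$. So $|R_L|$ is twice the number of exterior points $s$ with $L\subseteq s^\perp$, i.e. with $s\in L^\perp$. Since $L$ is a totally singular $2$-space in the nondegenerate $6$-space $V$, we have $\dim L^\perp=4$ and $L\le L^\perp$. The form $Q$ induces on $L^\perp/L$ a nondegenerate $2$-dimensional quadratic space of minus type; this is forced by Witt index $2$ and the fact that $L$ is already maximal totally singular. Any $s\in L^\perp$ satisfies $Q(s)=Q(\bar s)$, where $\bar s$ is its image in $L^\perp/L$. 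The anisotropic $2$-space over $\mathbb{F}_2$ has $3$ nonzero vectors, all with $Q=1$, and each has $4$ preimages. This gives $12$ exterior points in $L^\perp$, hence $24$ root bases, which proves (1).

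\textbf{Part (2).} The condition $L\subseteq v^\perp$ means $v\in L^\perp$, so $H_L=\{H_v\mid v\in L^\perp\}$. The map $v\mapsto H_v$ is $\mathbb{F}_2$-linear into $End(A)$. I will take $H_L$ to be the $K$-span of $H_{b_1},\dots,H_{b_4}$ for an $\mathbb{F}_2$-basis $b_1,\dots,b_4$ of $L^\perp$. To show these four are $K$-linearly independent, view each $H_v$ as a diagonal $27$-vector $((x|v))_{x\in\mathbb{P}}$ with entries in the prime field. A $K$-dependence among $0/1$-vectors implies an $\mathbb{F}_2$-dependence, which gives $(x|v)=0$ for all $x\in\mathbb{P}$ for some nonzero $v\in L^\perp$. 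But $\mathbb{P}$ spans $V$, since it contains a root base, and the form is nondegenerate, so $v=0$, a contradiction. Hence $\dim H_L=4$.

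\textbf{Parts (3) and (4).} First, the sum $H_L+\sum K R_\Delta$ is direct. The $R_\Delta$ have pairwise disjoint supports of matrix units $e_x\mapsto e_{x+s_\Delta}$ for $x\in\Delta$, because $(x,\Delta)$ determines $\Delta$. They are also off-diagonal, while the $H_v$ are diagonal. So the $24$ roots and the $4$ Cartan elements are linearly independent, giving dimension $28$.

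Next, closure under brackets, using Proposition \ref{p11}:
\begin{itemize}
\item $[H,H]=0$.
\item $[H_v,R_\Delta]\in KR_\Delta$.
\item $[R_\Delta,R_{\Delta^*}]=H_{s_\Delta}$, and $s_\Delta\in L^\perp$.
\item In parts 3 and 4 of Proposition \ref{p11} the bracket is $0$.
\item In part 5, $[R_\Delta,R_\Gamma]=R_{\Delta^{\sigma_\Gamma}}$. By Proposition \ref{p10}(iii) this root base corresponds to $s_\Delta+s_\Gamma$, which lies in $L^\perp$ because $L^\perp$ is a subspace. So the bracket lies in $R_L$.
\end{itemize}
This gives (3), and hence the closure statement in (4).

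The identification of type $D_4$ also comes from this: the $12$ exterior points of $L^\perp$ act as the $12$ positive roots and give a rank-$4$ root system with $24$ roots, all of one length, whose sums and orthogonality follow the rules above. I would exhibit explicitly four root bases whose vectors $s$ form a $D_4$ Dynkin diagram: three mutually orthogonal ones, each with $(\cdot|\cdot)=1$ to a central one.

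\textbf{Part (5).} This follows from Theorem \ref{t13}. Our $R_L$ coincides with $\{R_\Delta\mid\Delta\in\Phi_L\}$, and the brackets $[R_\Delta,R_{\Delta^*}]$ produce $H_L$, so $D_L$ here is the algebra of Theorem \ref{t13}, which sits inside $F_4$.

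\textbf{Expected obstacle.} The hardest step is the rigorous identification of the type as $D_4$ in characteristic $2$, where Killing-form arguments fail. I would rely on the explicit Chevalley-basis structure constants given by Proposition \ref{p11} and compare them with the $D_4$ root system realised on $L^\perp$.
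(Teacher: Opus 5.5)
The paper gives no argument of its own here: all five items are referred to \cite{16}. Your direct verification of (1)--(3), and of closure and the count $4+24=28$ in (4), is correct and says more than that citation. Item (1) holds because $L^\perp/L$ is an anisotropic plane, so the exterior points of $L^\perp$ are exactly the $12$ vectors of $L^\perp\setminus L$. Item (2) holds because $v\mapsto H_v$ is injective and $\mathbb{F}_2$-rank equals $K$-rank. Item (3) follows from the disjoint matrix supports and the case list of Proposition~\ref{p11}, with $s_{\Delta^{\sigma_\Gamma}}=s_\Delta+s_\Gamma\in L^\perp$. In (5) you should add why the $H_{s_\Delta}=[R_\Delta,R_{\Delta^*}]$ span $H_L$: the set $L^\perp\setminus L$ spans $L^\perp$. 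Beyond that, (5) rests entirely on Theorem~\ref{t13}, i.e.\ on \cite{16}, just as in the paper. Nothing there supports the figure $56$ in the statement; a Lie algebra of type $F_4$ has dimension $52$.

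The genuine gap is the claim that $D_L$ is of type $D_4$. Your sketch uses only mod-$2$ data, and that data does not determine the multiplication. The $12$ exterior points of $L^\perp$ are the images of the $12$ pairs $\pm\alpha$, so they do not form a root system. They cannot distinguish $\alpha$ from $-\alpha$, nor tell whether $\alpha+\beta$ or $\alpha-\beta$ is the root when $(s_\Delta|s_\Gamma)=1$. In $D_L$ this is exactly whether $[R_\Delta,R_\Gamma]$ is $0$ or a root vector. By Proposition~\ref{p11} that is decided by whether $\Delta\cap\Gamma=\emptyset$, information not carried by $s_\Delta,s_\Gamma$. So four $s$-vectors in a $D_4$ configuration are necessary but do not identify the algebra.

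What you must produce is a bijection $\phi$ from $\Phi_L$ onto the $24$ roots of $D_4$ satisfying three conditions:
\begin{itemize}
\item[(a)] $\phi(\Delta^*)=-\phi(\Delta)$;
\item[(b)] $\phi(\Delta)\equiv s_\Delta$ under an isometry $\Lambda/2\Lambda\cong L^\perp$, where $\Lambda$ is the $D_4$ root lattice with form $\tfrac12(x,x)\bmod 2$;
\item[(c)] whenever $(s_\Delta|s_\Gamma)=1$, $\Delta\cap\Gamma=\emptyset$ if and only if $(\phi(\Delta),\phi(\Gamma))=-1$.
\end{itemize}
These conditions force $\phi(\Delta^{\sigma_\Gamma})=\phi(\Delta)+\phi(\Gamma)$, because $\Delta^{\sigma_\Gamma}$ meets $\Gamma$. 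Proposition~\ref{p11} then reproduces the characteristic-$2$ Chevalley table (all $N_{\alpha,\beta}\equiv 1$ in simply laced type), with $H_{s_\Delta}$ in the role of $h_{\phi(\Delta)}$. The comparison must be with the simply connected form, since $\{H_v\mid v\in L\}$ is a $2$-dimensional centre of $D_L$.

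There are two ways to get $\phi$. You can build it from your four simple root bases, choosing $\Delta_i$ or $\Delta_i^*$ so that the central one is disjoint from the other three, and propagate through brackets; well-definedness then has to be checked. Alternatively, if the $E_6$ identification behind Remark~\ref{d12} is available as such a bijection on all of $\Phi$, restrict it to $\Phi_L$. The $E_6$ roots reducing into $L^\perp$ form a closed subsystem with $24$ roots, which can only be $D_4$.
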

 \begin{proof}
 	See \cite{16}.
 \end{proof}

Now we have the following main theorem.
\begin{theorem} \label{t2}
Let $L\in \mathcal{L}$, $d\in N_W (L)$ and $order (d)=3$, where $W$ is the Weyl group of type $E_6$. Then the centralizer $C_{D_L}(d)$ of $d$ in $D_L$ is a Lie subalgebra of type $G_2$ and of dimension $14$.    
\end{theorem}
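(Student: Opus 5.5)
The plan is to compute $C_{D_L}(d)$ directly from the decomposition $D_L=H_L\oplus\bigoplus_{R_\Delta\in R_L}KR_\Delta$ of Proposition \ref{p1}(3), and then recognise the structure constants of $G_2$.

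\textbf{Action of $d$ on $D_L$.} Since $W$ acts on $A$ by $e_x^g=e_{x^g}$, conjugation by $g\in W$ gives $H_v^g=H_{v^g}$ and $R_\Delta^g=R_{\Delta^g}$ (Remark \ref{d12}). If $g\in N_W(L)$ then $g$ also normalizes $L^\perp$, so it permutes $R_L$ and preserves $H_L$. Hence $d$ acts on $D_L$ as a Lie algebra automorphism, and the fixed space $C_{D_L}(d)$ is automatically a Lie subalgebra, because the fixed points of an automorphism are closed under the bracket.

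\textbf{Dimension count.} Since $d$ permutes the basis $R_L$, its fixed space on $\bigoplus KR_\Delta$ is spanned by the $R_\Delta$ with $\Delta^d=\Delta$ together with the orbit sums $S_X=R_X+R_X^d+R_X^{d^2}$ for the orbits of length $3$. On $H_L\cong L^\perp/\mathrm{rad}$ (a $4$-dimensional space) we compute $C_{H_L}(d)$ as the fixed space of the induced linear map.

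\begin{enumerate}
\item[(a)] \textbf{Choosing $d$.} We first pin down $d$ up to conjugacy. $L^\perp/L$ is a $2$-dimensional anisotropic space over $\mathbb F_2$, since $L^\perp$ is $4$-dimensional and contains $L$. We take $d$ to act trivially on $L$, and hence on $V/L^\perp$, while acting on $L^\perp$ as a triality-type element. Concretely, in the model $V=\mathbb F_4^3$ we pick a line $L$ and let $d$ be multiplication by a primitive cube root $\omega\in\mathbb F_4$ on a suitable coordinate block. The uniqueness of such $d$ up to conjugacy in $N_W(L)$ will rest on Remark \ref{re6} together with the structure of $N_W(L)$. \emph{This is the step I am least sure of.} There may be several classes of elements of order $3$ in $N_W(L)$; the statement presumably intends the class acting fixed-point-freely on the $4$ vertices of the $D_4$ diagram minus the centre. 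I would restrict to that class.
\item[(b)] \textbf{Orbit count on $R_L$.} We aim to show that $d$ fixes exactly $6$ of the $24$ roots (the long roots of $G_2$) and permutes the other $18$ in $6$ orbits of length $3$ (the short roots).
\item[(c)] \textbf{Cartan part.} We aim to show $\dim C_{H_L}(d)=2$.
\end{enumerate}

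Together these give $\dim C_{D_L}(d)=2+6+6=14$. The orbit count is done by listing the $24$ root bases with $s_\Delta\in L^\perp$ in coordinates and tracking $s_\Delta\mapsto s_\Delta^d$. The exterior points in $L^\perp$ give $12$ values of $s_\Delta$, each carrying the pair $\{\Delta,\Delta^*\}$. It then remains to decide when $\Delta^d=\Delta$ rather than $\Delta^d=\Delta^*$, which can be tested on a single point of $\Delta$.

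\textbf{Identifying the type.} We check that $C_{H_L}(d)$ is a Cartan subalgebra of $C_{D_L}(d)$, with the $12$ basis vectors $R_\Delta$ and $S_X$ as eigenvectors. Using Proposition \ref{p11}(1), the weight of $R_\Delta$ or $S_X$ is the restriction of $v\mapsto(s_\Delta|v)$ to the fixed space. We then use parts 2–5 of Proposition \ref{p11} to compute brackets such as $[R_\Delta,S_X]$ and $[S_X,S_Y]$. The goal is to show that the $12$ weights form a $G_2$ configuration: $6$ long roots forming an $A_2$ subsystem, and $6$ short roots, closed under the bracket pattern of $G_2$.

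The main obstacle is that characteristic $2$ degenerates the usual root-system recognition: weights take values in $K$ via $\mathbb F_2$, so distinct roots may share a weight. I would therefore identify the type combinatorially instead. I would exhibit simple roots, one fixed $R_\Delta$ (long) and one orbit sum $S_X$ (short), check that they generate the whole algebra, and match the resulting $14$-element basis and nonzero brackets against the Chevalley basis of $G_2$ reduced mod $2$.
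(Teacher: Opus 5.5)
\textbf{The gap is in step (a): the element you choose is in the wrong conjugacy class.} Your framework is the same as the paper's. There, the theorem is deduced from Proposition \ref{p4} (six fixed roots and six orbits of length $3$ in $R_L$) and Proposition \ref{p5} ($\dim C_{H_L}(d)=2$ and closure). Your remark that the fixed points of the automorphism $X\mapsto X^d$ form a subalgebra replaces the paper's case analysis.

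For the element you chose, step (b) is false. Suppose $d$ has order $3$ and acts trivially on $L$. The kernel of $N_W(L)\to GL(L)\times O(L^\perp/L)$ is a $2$-group, so $d$ must act fixed-point-freely on $L^\perp/L$. By Maschke, $L^\perp=L\oplus M$ with $d$ fixed-point-free on $M$. Every exterior point of $L^\perp\setminus L$ has the form $l+m$ with $m\neq 0$, and $(l+m)^d=l+m^d\neq l+m$. Hence no root base in $\Phi_L$ is fixed, and the $24$ roots fall into $8$ orbits of length $3$. So $C_{D_L}(d)$ has dimension $2+8=10$, not $14$.

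The $\mathbb{F}_4^3$ model shows this concretely. Take $L=\{(0,a,a)\}$, so $L^\perp=\{(x_1,a,a)\}$. Then $(x_1,x_2,x_3)\mapsto(\omega x_1,x_2,x_3)$ moves all twelve exterior points $(x_1,a,a)$ with $x_1\neq 0$.

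The conceptual reason is that an element fixing $L$ pointwise cannot be of triality type. The three points $l\in L$ index the three $8$-dimensional $D_L$-submodules of $A$, namely the spans of the $e_x$ with $x\notin L$ collinear with $l$. A triality-type element must permute these three submodules, so it must act nontrivially on $L$.

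\textbf{The class you want is the opposite one:} $d$ cycles the three points of $L$ and acts trivially on $L^\perp/L$, for example $(x_1,x_2,x_3)\mapsto(x_1,\omega x_2,\omega x_3)$. Then $L^\perp=L\oplus M$ with $M$ fixed pointwise, and the count works:
\begin{itemize}
\item The three exterior points of $M\setminus\{0\}$ are fixed. Since $d$ has odd order, it fixes both $\Delta$ and $\Delta^*$ over each of them, giving six fixed roots (the long roots).
\item The nine points $m+l$ with $l\neq 0$ form three orbits, giving six orbits of root bases.
\item $C_{H_L}(d)\cong K\otimes_{\mathbb{F}_2}M$ has dimension $2$.
\end{itemize}
The total is $14$. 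Up to conjugacy this $d$ is unique: once its image in $GL(L)\times O(L^\perp/L)\cong S_3\times S_3$ is fixed, apply Schur--Zassenhaus in $O_2(N_W(L))\langle d\rangle$.

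The remaining class acts nontrivially on both $L$ and $L^\perp/L$ (for example, scalar multiplication by $\omega$). It fixes no nonzero vector of $L^\perp$ and gives dimension $0+8=8$.

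So you were right that the statement must be restricted. As stated, the theorem holds for only one of the three classes of elements of order $3$ in $N_W(L)$. The same applies to Proposition \ref{p4}, whose proof in the paper only restates the claim. You need to rewrite step (a) to select the class above. After that, your steps (b), (c) and the type identification can proceed as planned.
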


To prove Theorem \ref{t2}, the following remark and propositions are needed.
\begin{remark} \label{r3}
The Weyl group $W$ permutes the points of $\mathbb{P}$ induces permutations on $27$ elements. So $W$ acts on $A$ by permutation, if $w\in W$, then $e_x^w=e_{x^w}$ where $A=\langle e_x|x\in \mathbb{P} \rangle$ and if $\Delta$ is a root base and $R_\Delta$ is a Lie root in $End_K (A)$, then $R_\Delta^w=P_w^{-1} R_\Delta P_w =R_{\Delta^w}$ where $P_w$ is the permutation induced by $w$ on $\mathbb{P}$. Hence if $d\in N_W(L)$ of order $3$, then $R_\Delta^d=R_{\Delta^d}$. Set $R_\Delta +R_\Delta^d + R_\Delta ^{d^2}= S_\Delta$, then it follows that $S_\Delta^{d}=S_\Delta^{d^2}=S_\Delta$ and this leads to the following proposition     
\end{remark}

\begin{proposition} \label{p4}
 Let $d\in N_W(L)$, $L\in \mathcal{L}$ and $order(d)=3$. Then $d$ has $6$ fixed Lie roots in $R_L$ and $6$ orbits of length $3$.   
\end{proposition}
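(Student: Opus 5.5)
The plan is to turn the statement into a count of exterior points in $L^\perp$ fixed by $d$. By Proposition \ref{p1}(1) and Remark \ref{r9}, the $24$ Lie roots in $R_L$ correspond to the root bases $\Delta$ with $s_\Delta\in L^\perp$. Each exterior point $s$ corresponds to exactly two root bases $\Delta,\Delta^*$ with $s_\Delta=s_{\Delta^*}=s$. Since $d$ normalizes $L$, it normalizes $L^\perp$. By Remark \ref{r3} we have $R_\Delta^d=R_{\Delta^d}$, so $d$ permutes $R_L$, and each orbit has length $1$ or $3$ because $d$ has order $3$.

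\textbf{Step 1 (structure of $L^\perp$).} Since $L$ is totally singular of dimension $2$ in the $6$-dimensional space $V$, $L^\perp$ has dimension $4$ and contains $L$. The quotient $L^\perp/L$ is a nondegenerate $2$-dimensional quadratic space of minus type, i.e. anisotropic, because the Witt index of $V$ is $2$. Hence a vector $v\in L^\perp$ is singular exactly when $v\in L$. So $L^\perp$ contains $16-4=12$ exterior points, giving $2\cdot 12=24$ roots, which matches Proposition \ref{p1}(1). We may then write $L^\perp=L\oplus M$ with $M$ a $d$-invariant anisotropic complement. Such an $M$ exists by Maschke's theorem, since $d$ has odd order over $\mathbb{F}_2$. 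All $3$ nonzero vectors of $M$ are exterior points.

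\textbf{Step 2 (fixed roots versus fixed exterior points).} If $\Delta^d=\Delta$, then clearly $s_\Delta^d=s_\Delta$. Conversely, if $s^d=s$, then $d$ permutes the two-element set $\{\Delta,\Delta^*\}$ of root bases attached to $s$. Because $d$ has odd order, it cannot swap them, so it fixes both. Therefore the number of fixed Lie roots is twice the number of $d$-fixed exterior points of $L^\perp$. Every other root lies in an orbit of length exactly $3$.

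\textbf{Step 3 (counting fixed exterior points).} This is the crux, and the main obstacle. An element of order $3$ acting on $\mathbb{F}_2^2$ is either trivial or fixed-point-free on the nonzero vectors. I will apply this to the two summands $L$ and $M$.

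First I will rule out that $d$ acts trivially on $L^\perp$. If it did, all $24$ roots would be fixed. I would handle this by interpreting the hypothesis $d\in N_W(L)$ as meaning that $d$ acts nontrivially, hence fixed-point-freely, on the points of $L$. That is presumably the intended element, and I would fix it explicitly in the model $V=\mathbb{F}_4^3$, for instance via multiplication by a primitive cube root of unity on suitable coordinates. I would then check, using the quadrangle structure, that such a $d$ acts trivially on $L^\perp/L\cong M$.

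With that in place, the fixed space of $d$ on $L^\perp$ is exactly $M$. A general vector $l+m$ with $l\in L$, $m\in M$ is fixed only if $l^d=l$, which forces $l=0$. So the fixed exterior points are exactly the $3$ nonzero vectors of $M$, giving $6$ fixed Lie roots. The remaining $9$ exterior points form $3$ orbits of length $3$, so the remaining $18$ roots form $6$ orbits of length $3$.

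If the intended $d$ instead acts trivially on $L$ and fixed-point-freely on $M$, the same computation gives $3$ fixed exterior points, namely the nonzero vectors of $L$. However, these are singular, so there would be no fixed roots. This is why the explicit choice of $d$ in Step 3 has to be verified rather than assumed.
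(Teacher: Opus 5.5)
Your Steps 1 and 2 are correct, and they already give more than the paper, whose proof only asserts the count $6+6\cdot 3$. The genuine gap is in Step 3, and it cannot be closed, because the proposition is \emph{false} for an arbitrary $d\in N_W(L)$ of order $3$. In the model $V=\mathbb{F}_4^3$, take $L=\{(a,a,0)\mid a\in\mathbb{F}_4\}$. Then $L^\perp=\{(a,a,b)\}$, and its exterior points are the $12$ vectors with $b\neq 0$. Consider the maps $\mathrm{diag}(\omega,\omega,1)$, $\mathrm{diag}(1,1,\omega)$ and $\omega\cdot\mathrm{id}$. Each preserves $Q$ (since $\omega\overline{\omega}=1$), normalizes $L$, and has order $3$. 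They fix $3$, $0$ and $0$ exterior points of $L^\perp$ respectively, so by your Step 2 they fix $6$, $0$ and $0$ Lie roots of $R_L$. The second map, which equals $(\sigma_s\sigma_t)^{\pm1}$ for $s=(0,0,1)$, $t=(0,0,\omega)$, is the case you flagged at the end. The third one defeats your proposed repair: $\omega\cdot\mathrm{id}$ acts nontrivially on $L$, yet it is also fixed-point-free on $L^\perp/L$. So the check that a $d$ acting nontrivially on $L$ ``acts trivially on $L^\perp/L$'' fails. Fixing one explicit element such as $\mathrm{diag}(\omega,\omega,1)$ proves the count only for that element, not for every $d$ allowed by the hypothesis.

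Your argument does establish the correct version of the statement, as a dichotomy. Write $L^\perp=L\oplus M$ with $M$ $d$-invariant. The $d$-fixed exterior points are the vectors $l+m$ with $l\in L$ and $m\in M\setminus\{0\}$ both fixed. An element of $GL_2(2)$ of order dividing $3$ is trivial or fixed-point-free on nonzero vectors. Hence there are exactly $3$ such points if $d$ centralizes $L^\perp/L$, and $0$ otherwise. The case you wanted to exclude ``by interpretation'' is impossible anyway. If $d$ were trivial on both $L$ and $L^\perp/L$, then $(v^d|l)=(v^d|l^d)=(v|l)$ would make $d$ trivial on $V/L^\perp$ too. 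Then $d$ would be unipotent, of $2$-power order. So the proposition holds exactly under the extra hypothesis that $d$ acts trivially on $L^\perp/L$, equivalently that $d$ fixes some exterior point of $L^\perp$. Under that hypothesis your Step 3 computation is complete. Without it, neither your proposal nor the paper's bare assertion can be turned into a proof.
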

 \textbf{Proof. } As $d$ permutes the Lie roots in $R_L$, then we have $6$ fixed Lie roots \begin{equation*}
     S_{\Delta_i}=R_{\Delta_i}+R_{\Delta_i}^d +R_{\Delta_i}^{d^2}\text{, }R_{\Delta_i}^d\neq R_{\Delta_i},
 \end{equation*} $i=1,...,6$, and $6$ orbits of length $3$ of the form 
 \[
\begin{tikzpicture}[baseline={(0,-0.3)}]
  \node (A) at (0.5,0) {$\Delta$};
  \node[circle, fill=black, inner sep=1pt] (B) at (1,0) {};
  \node[circle, fill=black, inner sep=1pt] (k) at (1.7,0) {};
  \node (C) at (2,0) {$\Delta^{d}$};
  \node [circle, fill=black, inner sep=1pt](D) at (2.35,0) {};
  \node[circle, fill=black, inner sep=1pt] (J) at (3.1,0) {};
  \node (E) at (3.5,0) {$\Delta^{d^2}.$};

  \draw (B) -- (C);
  \draw (D) -- (J);
\end{tikzpicture}
\]
This is equivalent to $C_{D_L}(d)$ has Lie roots $\{ R_\Delta | R_\Delta^d = R_\Delta \}$ and Lie roots \begin{equation*}
     \{ R_X+ R_X^d + R_X^{d^2} | X \text{ is a root base, } R_X^d \neq R_X \}.
 \end{equation*} 
 The total number of these Lie roots is $6.3+6=24$.

\begin{proposition} \label{p5}
Let $X$ be a root base. Define the Lie roots  \begin{equation*}
S_X = \left\{ \begin{array}{cl}
R_X & \text{if} \ R_X^d=R_X \\
R_X+ R_X^d + R_X^{d^2}  & \text{if} \ R_X^d \neq R_X \;\; .
\end{array} \right.    
\end{equation*}
 
 \text{Then} $G=C_{H_L}(d) \oplus \langle S_X \rangle$ is a Lie algebra of type $G_2< D_4$, where $C_{H_L}(d)$ is a Cartan Lie subalgebra of dimension $2$.    
\end{proposition}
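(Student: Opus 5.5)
I will compute the fixed-point subalgebra $C_{D_L}(d)$ directly and show that it coincides with $G$ and has the structure of $G_2$.

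\textbf{Step 1: $d$ acts on $D_L$ by Lie automorphisms.} By Remark \ref{r3}, conjugation by the permutation matrix $P_d$ is an automorphism of $End_K(A)$. It sends $R_\Delta$ to $R_{\Delta^d}$, and one checks directly that it sends $H_v$ to $H_{v^d}$. Since $d$ normalizes $L$, it also normalizes $L^\perp$. Hence $d$ preserves $R_L$ and $H_L$, and therefore $D_L$ by Proposition \ref{p1}. It follows that $C_{D_L}(d)$ is a Lie subalgebra of $D_L$.

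\textbf{Step 2: $C_{D_L}(d)=G$.} Write $D_L=H_L\oplus\bigoplus_{R_\Delta\in R_L}KR_\Delta$. On the root part, $d$ acts as a permutation of the basis. The fixed vectors of a permutation module are spanned by the orbit sums. By Proposition \ref{p4} there are $6$ fixed roots and $6$ orbits of length $3$, which gives the $12$ elements $S_X$, and these are linearly independent. The Cartan part needs a separate argument.

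\begin{itemize}
\item $H_L\cong L^\perp/\mathrm{rad}$ as a $d$-module; concretely $v\mapsto H_v$ on $L^\perp$, which is $4$-dimensional over $\mathbb{F}_2$, tensored with $K$.
\item Choose a suitable $d$; I would pick an explicit one in $V=\mathbb{F}_4^3$, for example a diagonal or permutation map fixing $L$. Then compute that the fixed space of $d$ on $L^\perp\otimes K$ is $2$-dimensional.
\item Since $3$ is invertible in characteristic $2$, $d$ is semisimple. The fixed space therefore equals the image of $1+d+d^2$, which makes the computation straightforward.
\end{itemize}

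This gives $\dim C_{D_L}(d)=2+12=14$. It also shows $C_{H_L}(d)$ is abelian and acts diagonally on each $S_X$, with weight $v\mapsto(s_X|v)$ restricted to the fixed space. By Proposition \ref{p11}(1), the weight is the same for all three terms of an orbit, because $(s_{X^d}|v)=(s_X|v^{d^{-1}})=(s_X|v)$ for fixed $v$.

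\textbf{Step 3: identification as $G_2$.} The plan is to show the restricted weights form a $G_2$ system.

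\begin{itemize}
\item The $12$ elements $S_X$ have pairwise distinct nonzero weights on $C_{H_L}(d)$, or at least they behave like the restriction of the $D_4$ roots under triality folding.
\item The $6$ fixed roots give the long roots, forming an $A_2$ subsystem. The $6$ orbit sums give the short roots.
\item Brackets follow from Proposition \ref{p11}. For example, $[S_X,S_{X^*}]$ expands via items 2–5 into $H_{s_X}+H_{s_X^d}+H_{s_X^{d^2}}$ plus cross terms. The cross terms vanish or cancel pairwise, using items 3–4, since $X^{d}$ and $X^*$ are related via Proposition \ref{p10}.
\item Then exhibit a simple system $\{S_a,S_b\}$, with $S_a$ a fixed long root and $S_b$ a short orbit sum. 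Verify the Chevalley-type relations $[S_a,S_b]\neq0$, $[[S_b,[S_b,S_a]]]\neq 0$, and so on, up to the $G_2$ string length.
\end{itemize}

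\textbf{Main obstacle.} I expect the hardest part to be the characteristic $2$ bookkeeping in the orbit-sum brackets. In $[S_X,S_Y]=\sum_{i,j}[R_{X^{d^i}},R_{Y^{d^j}}]$, several terms can give equal roots and cancel mod $2$. This threatens nondegeneracy of the short–short brackets; in characteristic $2$ one expects $[S_b,[S_b,S_a]]$ to possibly pick up a factor $2$.

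I would handle this by a direct incidence check for one explicit orbit $\{X,X^d,X^{d^2}\}$, deciding via Proposition \ref{p10} which pairs are disjoint. By $W$-transitivity (Proposition \ref{p8}) and the $d$-symmetry, this reduces to a small number of representative cases. If some brackets do degenerate, I would identify the algebra as $G_2(K)$ by comparing dimensions and the structure constants with the Chevalley basis reduced mod $2$, rather than by simplicity.
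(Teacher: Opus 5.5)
Your Steps 1--2 take a cleaner route than the paper. The paper proves closure by expanding $[S_X,S_Y]$ case by case with Proposition~\ref{p11} and $d$-equivariance, and then simply asserts both the type $G_2$ and $\dim C_{H_L}(d)=2$. Your observations give closure and $G=C_{D_L}(d)$ at once: $d$ acts by Lie automorphisms, and the fixed vectors of a permutation module are spanned by orbit sums. One minor slip: $H_L\cong L^\perp\otimes_{\mathbb F_2}K$; the radical of $(\cdot|\cdot)$ on $L^\perp$ is $L$, and $L^\perp/L$ is only $2$-dimensional.

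\textbf{The choice of $d$.} Step 2 depends on this silently. Proposition~\ref{p4} is not true for every $d\in N_W(L)$ of order $3$, and your test ``the fixed space on $L^\perp$ is $2$-dimensional'' does not detect the bad ones. Take $L=\mathbb F_4(1,1,0)$, so $L^\perp=\{(a,a,b)\}$, and let $\omega$ generate $\mathbb F_4^\times$.
The diagonal map $d\colon(x_1,x_2,x_3)\mapsto(x_1,x_2,\omega x_3)$ fixes $L$ pointwise, and $(L^\perp)^d=L$ is $2$-dimensional. Yet no anisotropic vector $(a,a,b)$ with $b\neq0$ is fixed. So there are no fixed roots, the $24$ roots form $8$ orbits, $\dim C_{D_L}(d)=10$, and $C_{H_L}(d)=\langle H_v\mid v\in L\rangle$ is the centre of $D_L$.
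You need $d$ to permute the three points of $L$ cyclically, equivalently to act trivially on $L^\perp/L$; an example is $(x_1,x_2,x_3)\mapsto(\omega x_1,\omega x_2,x_3)$. Then $M=(L^\perp)^d$ is an anisotropic complement of $L$ in $L^\perp$, and the six fixed roots are the $R_\Delta$ with $s_\Delta\in M$.

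\textbf{Step 3 fails in characteristic $2$.} For $m\in M$ one has $[H_m,S_X]=(s_X|m)S_X$, and $(s_X|m)$ depends only on $s_X+L\in L^\perp/L$. So the weights on $C_{H_L}(d)=M\otimes K$ come from $\mathbb F_2$-valued functionals on $M$. Only three nonzero weights occur, and each is shared by four of the $S_X$ (two fixed roots and two orbit sums). The $G_2$ root system therefore cannot be read off from $C_{H_L}(d)$.

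The string test fails outright, not just possibly. Label $R_L$ by $D_4$ roots with $d$ the diagram automorphism fixing $\alpha_2$. Then $S_b=R_{\alpha_1}+R_{\alpha_3}+R_{\alpha_4}$, $S_a=R_{\alpha_2}$, and $[S_b,S_a]=R_{\alpha_1+\alpha_2}+R_{\alpha_2+\alpha_3}+R_{\alpha_2+\alpha_4}$. In $[S_b,[S_b,S_a]]$ each of $R_{\alpha_1+\alpha_2+\alpha_3}$, $R_{\alpha_1+\alpha_2+\alpha_4}$, $R_{\alpha_2+\alpha_3+\alpha_4}$ occurs twice, so the bracket is $0$.

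Your fallback is thus the only viable route, and it needs an actual mechanism:
\begin{enumerate}
\item Identify $D_L$ with $\mathfrak d_4(\mathbb Z)\otimes K$ via $R_\Delta\mapsto e_\alpha$ and $H_{s_\Delta}\mapsto h_\alpha$; Proposition~\ref{p11} supplies exactly the Chevalley structure constants mod $2$.
\item Arrange this so that $d$ becomes the graph automorphism $\tau$, which is possible for the $d$ described above.
\item With a $\tau$-invariant Chevalley basis, $\tau$ permutes the $\mathbb Z$-basis $\{e_\alpha\}\cup\{h_{\alpha_1},\dots,h_{\alpha_4}\}$, so taking fixed points commutes with $\otimes K$.
\item $\mathfrak d_4(\mathbb Z)^\tau$ is a Chevalley $\mathbb Z$-form of $\mathfrak g_2$; the orbit sums realise the constants $\pm1,\pm2,\pm3$. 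Hence $G\cong\mathfrak g_2(\mathbb Z)\otimes K$.
\end{enumerate}
The paper's own proof does not supply this identification either; it stops at closure.
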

\begin{proof}

The order of the set $\{S_X\}$ is $6+6=12$. Now we have to show that $G$ is closed under the Lie multiplication, i.e $[S_X,S_Y]$ is an element of $C_{H_L}(d)$, or $0$ or a Lie root $S_Z$ for some root base $Z$. So we  compute the following cases
\textbf{Case(1). }Let $R_\Delta$ be in $\{S_X\}$ such that $R_\Delta^d=R_\Delta$ and $R_\Gamma+R_\Gamma^d +R_\Gamma^{d^2}$ in $\{ S_X \}$ where $R_\Gamma^d \neq R_\Gamma$, then \begin{align*}
        [R_\Delta, R_\Gamma+R_\Gamma^d +R_\Gamma^{d^2}] &=[R_\Delta, R_\Gamma]+ [R_\Delta, R_\Gamma^d] + [R_\Delta, R_\Gamma^{d^2}]\\
        &=[R_\Delta, R_\Gamma]+ [R_\Delta^d, R_\Gamma^d] + [R_\Delta^{d^2}, R_\Gamma^{d^2}]\\
        &=[R_\Delta, R_\Gamma]+ [R_\Delta, R_\Gamma]^d + [R_\Delta, R_\Gamma]^{d^2}\\
        &=0 \text{ or } R_D+R_D^d+R_D^{d^2}
    \end{align*}
    for some root base $D$ by Proposition \ref{p11}.\\
\textbf{Case(2). }We compute $[R_\Delta , R{_\Delta^*}]$. Let $R_\Delta=M$, $R_{\Delta^*}=N$, $x,y\in \mathbb{P}$, then $(MN)_{xy}=\displaystyle\sum_{z}M_{xz} N_{zy} = 1$ if and only if $x=y\in \Delta\cap \Delta^{*^{\sigma_\Delta}}=\Delta$ and $(NM)_{uv}=1$ if and only if $u=v\in \Delta^* \cap \Delta^{\sigma_\Delta}=\Delta^*$. Hence $[M,N]$ is diagonal having $1's$ exactly in positions $(x,x)$ for $x\in \Delta\cup \Delta^*= \{ x| (s_\Delta |x)=1 \}$. Hence $[R_\Delta, R_{\Delta^*}]=H_{s_\Delta}$ where $s_\Delta \in L^\perp \setminus L$ and $H_{s_\Delta} \in C_{H_L}(d)$.\\ 
    \textbf{Case(3). }We consider the case $[R_\Delta+ R_\Delta^d+ R_\Delta^{d^2},R_\Gamma+ R_\Gamma^d+ R_\Gamma^{d^2}]$, where $R_\Delta^d\neq R_\Delta$ and $R_\Gamma^d\neq R_\Gamma$. So one has 
    \begin{align*}
        &[R_\Delta+ R_\Delta^d+ R_\Delta^{d^2},R_\Gamma+ R_\Gamma^d+ R_\Gamma^{d^2}]=\\
        &[R_\Delta, R_\Gamma] + [R_\Delta, R_\Gamma^d]+ [R_\Delta, R_\Gamma^{d^2}]+
        [R_\Delta^d, R_\Gamma] + [R_\Delta^d, R_\Gamma^d]+ [R_\Delta^d, R_\Gamma^{d^2}]+\\
        &[R_\Delta^{d^2}, R_\Gamma] + [R_\Delta^{d^2}, R_\Gamma^d]+ [R_\Delta^{d^2}, R_\Gamma^{d^2}]=\\
        &[R_\Delta, R_\Gamma] +[R_\Delta, R_\Gamma]^d +[R_\Delta, R_\Gamma]^{d^2} +[R_\Delta, R_\Gamma^d] + [R_\Delta^d, R_\Gamma^{d^2}] + [R_\Delta^{d^2}, R_\Gamma]+\\
        &  [R_\Delta, R_\Gamma^{d^2}]+[R_\Delta^d, R_\Gamma]+ [R_\Delta^{d^2}, R_\Gamma]=\\
        &[R_\Delta, R_\Gamma] +[R_\Delta, R_\Gamma]^d +[R_\Delta, R_\Gamma]^{d^2} +[R_\Delta, R_\Gamma^d] +[R_\Delta, R_\Gamma^{d}]^d + [R_\Delta, R_\Gamma^d]^{d^2}+\\
        & [R_\Delta, R_\Gamma^{d^2}]+ [R_\Delta, R_\Gamma^{d^2}]^d+ [R_\Delta, R_\Gamma^{d^2}]^{d^2}\\
        &= (R_{D_0} + R_{D_0}^d + R_{D_0}^{d^2})+(R_{D_1}+ R_{D_1}^d + R_{D_1}^{d^2})+(R_{D_2}+ R_{D_2}^d + R_{D_2}^{d^2})
    \end{align*}
    where \begin{equation*}
        [R_\Delta, R_\Gamma] = \left\{ \begin{array}{cl}
0 & \\
\text{or } R_{D_0} & \\
\text{or in } C_{H_L}(d) & 
\end{array} \right. ,
[R_\Delta, R_\Gamma^d] = \left\{ \begin{array}{cl}
0 & \\
\text{or }R_{D_1} & \\
\text{or in } C_{H_L}(d) & 

\end{array} \right.
    \end{equation*}
and \begin{equation*}
    [R_\Delta, R_\Gamma^{d^2}] = \left\{ \begin{array}{cl}
0 & \\
\text{or }R_{D_2} & \\
\text{or in } C_{H_L}(d) & 

\end{array} \right.
\end{equation*} by Proposition \ref{p11}. This completes the proof.
\end{proof}\\
\textbf{Proof of Theorem \ref{t2}. } It is an immediate consequence of Propositions \ref{p5}, \ref{p4} and hence the centralizer $C_{D_L}(d)$ is a Lie algebra of type $G_2$. It has dimension $14$ as $dim\langle S_X\rangle =12$ and $dim C_{H_L}(d)=2$, moreover it is a Lie subalgebra of $D_4$. Hence the claim.
\section{Acknowledgment}
The authors would like to  thank the Public Authority for Applied Education and Training for supporting this research project No BE-25-13.\\

\end{document}